\documentclass[12pt,english,a4paper,oneside]{amsart}

\usepackage[cp1251]{inputenc}
\usepackage[english]{babel}
\usepackage{amsmath,amsthm,amssymb}

\usepackage[
a4paper,
left=22mm,
right=18mm,
top=20mm,
bottom=27mm
]{geometry}

\usepackage{xcolor}
\usepackage[colorlinks]{hyperref}

\newtheorem{theorem}{Theorem}[section]
\newtheorem{lemma}[theorem]{Lemma}
\newtheorem{corollary}[theorem]{Corollary}

\numberwithin{equation}{section}
\theoremstyle{definition}

\newtheorem{remark}[theorem]{Remark}
\newtheorem{example}[theorem]{Example}

\begin{document}

\title{Dimension-free Markov--Bernstein-type inequality for $\mathbb R^2$-valued polynomials}

\author{Egor Kosov}

\address{\noindent Egor Kosov,
Centre de Recerca Matem\`atica, Campus de Bellaterra, Edifici~C 08193
Bellaterra (Barcelona), Spain.}
\email{kosoved09@gmail.com}

\subjclass[2020]{Primary 26D10, 60E15;
Secondary 60E05, 60H07}

\keywords{Markov--Bernstein inequality, dimension-free inequalities,
polynomial mappings, product measures, Malliavin determinant, Wiener chaos}

\begin{abstract}
We prove a dimension-free Markov--Bernstein-type inequality for polynomial
mappings $(f_1,f_2)\colon\mathbb R^n\to\mathbb R^2$ of degree at most $d$. 
For product probability measures whose one-dimensional marginals have
densities bounded by $1$, we obtain a lower bound for the integral of
the Gram determinant of $\nabla f_1$ and $\nabla f_2$ in terms of the
Gram determinant of the $d$-th derivative tensors $D^df_1$ and $D^df_2$,
with a constant depending only on $d$.
For pairs in a fixed Wiener chaos, the resulting Gaussian inequality
answers a question of Nourdin, Nualart, and Poly concerning a lower bound
for the expectation of the Malliavin determinant in terms of the
determinant of the covariance matrix.

\end{abstract}

\maketitle

\section{Introduction}

Let $\mathcal{P}_d(\mathbb{R}^n)$ denote the space of all algebraic polynomials of degree at most $d$
on $\mathbb{R}^n$.
Let $G_H(h_1, h_2)$ denote the Gram matrix of
vectors $h_1$ and $h_2$ from a Euclidean space $H$.
In this paper we prove the following dimension-free inequality for $\mathbb{R}^2$-valued polynomial mappings.

\begin{theorem}\label{th-main}
Let $d\in\mathbb N$. There exists a constant $C(d)>0$ such that, for every
$n\in\mathbb N$, $n\ge2$, every product measure
$\mu=\mu_1\otimes\ldots\otimes\mu_n$, where each $\mu_j$ is a probability measure on $\mathbb R$ with a density bounded by $1$, 
and every pair of polynomials
$f_1, f_2\in \mathcal{P}_d(\mathbb{R}^n)$, one has
\[
\int_{\mathbb R^n}
\det G_{\mathbb R^n}(\nabla f_1,\nabla f_2)\,d\mu
\ge
C(d)
\det
G_{(\mathbb R^n)^{\otimes d}}
(D^df_1,D^df_2),
\]
where $D^df_j$ denotes the $d$-th derivative tensor of $f_j$.
One may take $C(d)=2^{-71d}(d!)^{-4}$.
\end{theorem}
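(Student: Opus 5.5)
Our plan is to prove the theorem by induction on the degree $d$, after reducing both sides to invariant quantities. Since both sides are invariant under $(f_1,f_2)\mapsto (f_1,f_2)A$ for $A\in SO(2)$ (both are determinants of Gram matrices that transform by $\det A^2$), we first normalize so that $D^df_1$ and $D^df_2$ are orthogonal in $(\mathbb R^n)^{\otimes d}$. Then the right-hand side equals $\|D^df_1\|^2\|D^df_2\|^2$, and we may also assume $\|D^df_1\|\ge\|D^df_2\|$. The left-hand side is
\[
\int |\nabla f_1|^2|\nabla f_2|^2-\langle\nabla f_1,\nabla f_2\rangle^2\,d\mu
=\tfrac12\sum_{i,j}\int (\partial_i f_1\partial_j f_2-\partial_j f_1\partial_i f_2)^2\,d\mu ,
\]
a sum of squared $2\times2$ minors. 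The key one-dimensional input is the known dimension-free Markov--Bernstein/Carbery--Wright-type fact for product measures with densities bounded by $1$: for a polynomial $g$ of degree $\le m$, $\int g^2\,d\mu\ge c(m)\,\|D^m g\|^2/(m!)^2$ up to constants. This follows by freezing all coordinates but one and using that on $\mathbb R$ a density bounded by $1$ spreads mass over length at least $1$, and then tensorizing over the multilinear top-degree part.

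Applying this to each minor $g_{ij}=\partial_if_1\partial_jf_2-\partial_jf_1\partial_if_2$, which has degree $\le 2d-2$, gives
\[
\int g_{ij}^2\,d\mu\ge c\,\|D^{2d-2}g_{ij}\|^2 .
\]
The top-degree part of $g_{ij}$ is the antisymmetrized product of the top homogeneous parts of $\partial_if_1,\partial_jf_2$. Summing over $i,j$, we therefore want to bound $\sum_{ij}\|\mathrm{top}(g_{ij})\|^2$ from below by $\|D^df_1\|^2\|D^df_2\|^2$. Here a problem appears: the top parts may cancel. For example, if $f_2$'s top form is a function of $f_1$'s top form, which is possible when the top forms are not orthogonal as functions, the cancellation can be complete. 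Orthogonality in the tensor norm does not prevent this when $d$ is even, e.g.\ for $x_1^2-x_2^2$ and $2x_1x_2$, so we must also use lower-order information.

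This is where we expect the main obstacle, and we plan to handle it with a contraction argument that lowers the degree. Take the $(d-1)$-fold contraction: for a unit vector $e$ and directions chosen by averaging over Gaussian directions, $\partial_{h_1}\cdots\partial_{h_{d-1}}f_j$ is an affine function whose gradient is $D^df_j(h_1,\dots,h_{d-1},\cdot)$. We then use the fact that $\det G$ of the gradients dominates, after averaging, the analogous determinant for derivatives of $f_j$. Concretely, we prove the inductive inequality
\[
\int\det G(\nabla f_1,\nabla f_2)\,d\mu\ \ge\ c_d\,\mathbb E_{h}\int\det G(\nabla\partial_hf_1,\nabla\partial_hf_2)\,d\mu ,
\]
with $h$ standard Gaussian in $\mathbb R^n$. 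The right-hand side averaged over $h$ reduces degree by one, and $\mathbb E_h\det G(D^{d-1}\partial_hf_1,D^{d-1}\partial_hf_2)$ equals $\det G(D^df_1,D^df_2)$ up to a cross term that is nonnegative by Cauchy--Schwarz (the Cauchy--Binet identity for the Gram determinant of partial contractions). Iterating $d-1$ times lands at degree $1$, where $\det G$ of constant gradients is exactly the right-hand side.

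To prove the inductive step we use a Remez/Bernstein-type bound on each line: along a direction $h$, the polynomial $t\mapsto$ (minor of $\nabla f$ at $x+th$) is controlled by its values, and a Markov inequality in one variable for densities bounded by $1$ gives $\int|\partial_h g|^2\le C(d)\int g^2$-type comparisons with constants $2^{O(d)}$. Applying this to each $2\times2$ minor and using the product rule, namely that the minor of $\nabla\partial_hf$ appears in $\partial_h^2$ of the minor of $\nabla f$ minus mixed terms controlled by Cauchy--Schwarz, yields the step. The bookkeeping of these constants should produce $2^{-71d}(d!)^{-4}$. The delicate point, and the place we expect to fight hardest, is controlling the mixed terms $\partial_hf_1$-versus-$\partial_h^2f_2$ without losing dimension dependence.
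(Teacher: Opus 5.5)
Your plan has a genuine gap: the inductive inequality
\[
\int\det G(\nabla f_1,\nabla f_2)\,d\mu\ \ge\ c_d\,\mathbb E_h\int\det G(\nabla\partial_hf_1,\nabla\partial_hf_2)\,d\mu,
\]
which carries the whole argument, is false. Take $n\ge2$, $q=|x|^2/2$, $f_1=q$, $f_2=q^2$ (so $d=4$). Then $\nabla f_2=2q\nabla f_1$, so the left-hand side is $0$. On the other hand, $\nabla\partial_hf_1=h$ and $\nabla\partial_hf_2=2\langle h,x\rangle x+|x|^2h$ give $\det G(\nabla\partial_hf_1,\nabla\partial_hf_2)=4\langle h,x\rangle^2\bigl(|h|^2|x|^2-\langle h,x\rangle^2\bigr)$, so the right-hand side is strictly positive.

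This is not a degenerate artifact. Replacing $f_1$ by $q+\varepsilon x_1^4$ makes $D^4f_1,D^4f_2$ linearly independent, yet the left side is $O(\varepsilon^2)$ while the right side stays bounded below. The problem is structural: $\partial_hf$ carries all lower-order terms, and a functional relation involving them can kill every minor of $\nabla f$ without killing those of $\nabla\partial_hf$. In this example, the mixed terms you hoped to absorb by Cauchy--Schwarz cancel the main term exactly.

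The endpoint of your induction is also wrong as stated. On $\mathbb R^{2+m}$ with coordinates $(x_1,x_2,y)$, take $F_1=x_1|y|^2/2$ and $F_2=x_2|y|^2/2$. One finds $\mathbb E_h\det G(D^2\partial_hF_1,D^2\partial_hF_2)=8m^2+8m$ but $\det G(D^3F_1,D^3F_2)=9m^2$, so your ``cross term'' is negative once $m>8$.

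The route you abandoned was the right one, and your reason for abandoning it is mistaken. Let $F_1,F_2$ be the top homogeneous parts (both of degree $d$), and suppose all minors $\partial_iF_1\partial_jF_2-\partial_jF_1\partial_iF_2$ vanish identically. Then Euler's identity $\langle x,\nabla F_k\rangle=dF_k$ gives $\nabla(F_2/F_1)=0$ on $\{F_1\ne0\}$. Hence $F_2=cF_1$ and $\det G(D^df_1,D^df_2)=0$. Your example is also incorrect: for $x_1^2-x_2^2$ and $2x_1x_2$ the Jacobian is $4(x_1^2+x_2^2)$.

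So Cauchy--Binet followed by the scalar bound of Theorem~\ref{th-MB} applied to each minor is exactly the paper's reduction. It leaves only a \emph{quantitative, dimension-free} coefficient inequality, $\sum_{\mathbf r}[\det J_f^{\mathbf r}]_2^2\ge 2^{-39d}[[f]]_2^2$ (Theorem~\ref{th-algebr}), and that is the idea missing from your proposal.

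The paper proves it by restricting the top forms to the complex planes $te^{2\pi ix}+se^{2\pi iy}$, $x,y\in[0,1]^n$. For binary forms, the identity $(u/v)'=J(z,1)/(d\,v^2)$ on a circle $|z|=r\in[1,2]$ where $v$ is bounded below shows that the coefficients of the Jacobian $J$ control $|U(1,0)V(0,1)-U(0,1)V(1,0)|^2$. Averaging over $x,y$ and using orthogonality of the characters $e^{2\pi i(\langle x,\mathbf u\rangle+\langle y,\mathbf v\rangle)}$ then bounds the averaged Jacobian coefficient norms in two directions. From below the bound is $2^{-30d-5}[[f]]_2^2$; from above it is $2^{4d}d^3$ times the sum of the squared coefficient norms of the top-degree minors.
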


The classical integral Markov--Bernstein inequality~\cite{HST}
controls the $L^p$-norm of a derivative of a polynomial $f$ of degree at most $d$
on $[0,1]$ in terms of the norm of the polynomial itself:
\[
\|f^{(r)}\|_{L^p([0,1])}
\le
C(p, d,r)\|f\|_{L^p([0,1])}.
\]
Dimension-free generalizations of this inequality were studied in
\cite{EI20} for the standard Gaussian measure and in \cite{Kos26-MB}
for $L^p$-norms with $p$ an even integer and product measures
$\mu=\mu_1\otimes\ldots\otimes\mu_n$ whose components have unimodal densities.
In particular, for such measures with densities bounded by $1$, \cite[Theorem~1.6]{Kos26-MB} gives
\begin{equation}\label{eq-MB-1}
\int_{\mathbb{R}^n}|f|^{2}\,d\mu
\ge
C(d)\int_{\mathbb{R}^n}|\nabla f|^{2}\,d\mu
\end{equation}
for every polynomial $f\in\mathcal{P}_d(\mathbb{R}^n)$.
Iterating this inequality gives
\begin{equation}\label{eq-MB-2}
	\int_{\mathbb{R}^n}|f|^{2}\,d\mu
	\ge
	C(d)\|D^df\|_{(\mathbb R^n)^{\otimes d}}^2.
\end{equation}
When the components of $\mu$ are isotropic log-concave measures, this estimate
was obtained in~\cite{GM22}.
Applying \eqref{eq-MB-2} to the partial derivatives of $f$, we obtain an
inequality comparing the norms of the first- and highest-order derivatives:
\[
\int_{\mathbb{R}^n}|\nabla f|^{2}\,d\mu
\ge
C(d)\|D^df\|_{(\mathbb R^n)^{\otimes d}}^2.
\]
Theorem~\ref{th-main} gives a corresponding estimate for
$\mathbb R^2$-valued polynomial mappings. Indeed, since
$G_{\mathbb R^n}(\nabla f_1,\nabla f_2)=J_fJ_f^*$,
where $J_f$ is the Jacobian matrix of $f=(f_1,f_2)$,
Theorem~\ref{th-main} shows that linear independence of the
$d$-th derivative tensors of $f_1$ and $f_2$ forces the
nondegeneracy of $J_f$ and gives quantitative control of this implication.
Such nondegeneracy estimates play an important role in the study of
small-ball probabilities and the regularity of densities of polynomial mappings,
see, for instance,~\cite{KosZh}.

\begin{remark}
Analogues of Theorem~\ref{th-main} for polynomial mappings with more
than two components or for non-product measures fail in general,
see Examples~\ref{ex-three-components} and~\ref{ex-nonproduct} below.
\end{remark}

\subsection{The Gaussian case}

Let $\gamma$ be a Gaussian measure on an abstract Wiener space $E$, and let
$\mathcal H_d(\gamma)$ denote the Wiener chaos of order $d$.
Nourdin, Nualart, and Poly~\cite{NNP} proved that the law of
$f=(f_1,f_2)$, with $f_1,f_2\in\mathcal H_d(\gamma)$, is absolutely
continuous if and only if the expectation of the determinant $\Delta_f$
of its Malliavin matrix is positive.
In connection with this result,
Nourdin, Nualart, and Poly \cite[Question~6.2]{NNP-arxiv} raised the following
natural question concerning a lower bound for the expectation of $\Delta_f$ in terms of the determinant of the covariance matrix
$\operatorname{Cov}_{\gamma}(f)$:

\smallskip

\noindent
{\it
Is it true that there exists a constant $C(d)>0$, depending only on $d$, such
that, for every $f=(f_1,f_2)$ with
$f_1,f_2\in\mathcal H_d(\gamma)$, one has
\begin{equation}\label{NNP-conj}
	\int_E\Delta_f\,d\gamma
	\ge
	C(d)\det\operatorname{Cov}_{\gamma}(f)\quad?
\end{equation}
}

We note that the question was originally formulated for general
$k$-dimensional random vectors with components in a fixed Wiener chaos, but
simple examples show that the analogous estimate fails in general for $k>2$
(for instance, see Example~\ref{ex-three-components} below).

For $d=2$, this question was answered affirmatively in \cite{NNP}.
Nualart and Tudor \cite{NT17} proved \eqref{NNP-conj} for
$d=3$ and $d=4$. For $d\ge5$, they established a weaker substitute involving
additional nonnegative terms:
\[
\int_E\Delta_f\,d\gamma
+
\sum_{j=2}^{\lfloor (d-1)/2\rfloor}
C_j(d)\int_E\Delta_f^{(j)}\,d\gamma
\ge
C(d)\det\operatorname{Cov}_{\gamma}(f),
\]
where $\Delta_f^{(j)}$ denotes the determinant of the $j$th iterated
Malliavin matrix.
In \cite{Kos26}, the expectation of the Malliavin determinant was bounded
below in terms of a weaker quantity given by the supremum of the
squares of the $2\times2$ minors formed from the Hermite coefficient
vectors of $f_1$ and $f_2$.

Theorem~\ref{th-main} implies \eqref{NNP-conj} for every $d\in\mathbb N$
and therefore answers the above question affirmatively. Indeed, by approximation,
it is sufficient to establish \eqref{NNP-conj}
for the standard Gaussian measure $\gamma_n$
on $\mathbb{R}^n$. In this case, $\mathcal H_d(\gamma_n)$
is the orthogonal complement of
$\mathcal P_{d-1}(\mathbb R^n)$ in $\mathcal P_d(\mathbb R^n)$
with respect to the inner product in $L^2(\gamma_n)$, and
\[
\Delta_f
=
\det (J_fJ_f^*)
=
\det G_{\mathbb R^n}(\nabla f_1,\nabla f_2).
\]
Moreover, the Hermite polynomial expansions of $f_1$ and $f_2$
(see \cite[Definition~1.3.1]{Gaus}) imply that
\[
\det
G_{(\mathbb R^n)^{\otimes d}}
(D^df_1,D^df_2)
=
(d!)^2
\det\operatorname{Cov}_{\gamma_n}(f)
\]
for $f_1,f_2\in\mathcal H_d(\gamma_n)$.
Thus, \eqref{NNP-conj} for $\gamma_n$ follows directly from
Theorem~\ref{th-main}, and we obtain the following corollary.

\begin{corollary}
Let $\gamma$ be a Gaussian measure on an abstract Wiener space $E$,
let $d\in\mathbb N$, and let $f=(f_1,f_2)$ with
$f_1,f_2\in\mathcal H_d(\gamma)$. Then
\[
\int_E\Delta_f\,d\gamma
\ge
2^{-71d}(d!)^{-2}
\det\operatorname{Cov}_{\gamma}(f).
\]
\end{corollary}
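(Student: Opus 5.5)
The statement to prove is the Corollary. It is a direct consequence of Theorem~\ref{th-main} together with the identities recorded just before it, so the plan is a reduction followed by an approximation.

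\textbf{Step 1: the standard Gaussian case.} Take $E=\mathbb R^n$ with $\gamma=\gamma_n$. Its one-dimensional marginals have density $(2\pi)^{-1/2}e^{-t^2/2}\le 1$, so Theorem~\ref{th-main} applies for $n\ge2$. For $f_1,f_2\in\mathcal H_d(\gamma_n)\subset\mathcal P_d(\mathbb R^n)$ we have $\Delta_f=\det G_{\mathbb R^n}(\nabla f_1,\nabla f_2)$. The Hermite expansion gives
\[
\det G_{(\mathbb R^n)^{\otimes d}}(D^df_1,D^df_2)=(d!)^2\det\operatorname{Cov}_{\gamma_n}(f).
\]
This identity can be checked directly. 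Write $f_j=\sum_{|\alpha|=d}c^j_\alpha H_\alpha$ with normalized Hermite polynomials $H_\alpha$. Then $D^df_j$ is a constant symmetric tensor, and
\[
\langle D^df_1,D^df_2\rangle=d!\sum_\alpha c^1_\alpha c^2_\alpha=d!\,\mathbb E f_1f_2 ,
\]
where $\mathbb E f_j=0$ and $\operatorname{Cov}=(\mathbb E f_if_j)$.

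Theorem~\ref{th-main} then yields
\[
\int\Delta_f\,d\gamma_n\ge 2^{-71d}(d!)^{-4}(d!)^2\det\operatorname{Cov}=2^{-71d}(d!)^{-2}\det\operatorname{Cov}.
\]
The case $n=1$ follows by embedding into $\mathbb R^2$ with the extra coordinate unused. Both sides are unchanged by this: $\nabla f$ acquires a zero coordinate, the covariance is the same, and the chaos property is preserved. In fact, for $n=1$ we have $\det\operatorname{Cov}=0$, since both $f_1,f_2$ are multiples of $H_d$.

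\textbf{Step 2: general abstract Wiener space.} Fix an orthonormal basis $(e_k)$ of the Cameron--Martin space and the associated i.i.d.\ standard Gaussians $\xi_k$. Let $f_j^{(n)}=\mathbb E[f_j\mid\xi_1,\dots,\xi_n]$. Conditional expectation onto $\sigma(\xi_1,\dots,\xi_n)$ preserves $\mathcal H_d$ (it is the projection of the chaos kernel onto $\mathrm{span}(e_1,\dots,e_n)^{\otimes d}$). Hence $f_j^{(n)}$ is a degree-$d$ chaos polynomial in $\xi_1,\dots,\xi_n$, and Step 1 applies to it.

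\textbf{Step 3: passage to the limit.} We have $f_j^{(n)}\to f_j$ in $L^2(\gamma)$, so the covariance matrices converge. On a fixed chaos all $L^p$ norms are equivalent by hypercontractivity, and $\|Df\|_{L^2}^2=d\|f\|_{L^2}^2$ on $\mathcal H_d$. Therefore $Df_j^{(n)}\to Df_j$ in $L^4(\gamma;H)$. Since $\Delta_f=|Df_1|^2|Df_2|^2-\langle Df_1,Df_2\rangle^2$ is quartic in $(Df_1,Df_2)$, Hölder's inequality gives $\int\Delta_{f^{(n)}}\,d\gamma\to\int\Delta_f\,d\gamma$.

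\textbf{Main obstacle.} The only step needing care is this last convergence of $\int\Delta$, and it is handled by hypercontractivity. Everything else is the bookkeeping of the constant $(d!)^{-4}\cdot(d!)^2=(d!)^{-2}$.
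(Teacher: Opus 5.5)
Your proposal is correct and follows the paper's own argument. You reduce to the standard Gaussian measure $\gamma_n$, identify $\Delta_f$ with $\det G_{\mathbb R^n}(\nabla f_1,\nabla f_2)$, use the Hermite-expansion identity $\det G_{(\mathbb R^n)^{\otimes d}}(D^df_1,D^df_2)=(d!)^2\det\operatorname{Cov}_{\gamma_n}(f)$, and apply Theorem~\ref{th-main}. The only difference is that you spell out two things the paper compresses into ``by approximation'': the limiting argument (conditional expectations onto $\sigma(\xi_1,\dots,\xi_n)$, which preserve $\mathcal H_d$, plus hypercontractivity to pass to the limit in $\int\Delta_f\,d\gamma$) and the trivial case $n=1$.
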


\subsection{Ingredients of the proof}

The proof of Theorem~\ref{th-main} has two ingredients.
The first, and more important, ingredient is a dimension-free algebraic
estimate for the coefficients of the partial Jacobian determinants of
an $\mathbb R^2$-valued polynomial mapping. We reduce this problem
to two variables by restricting multivariate polynomials to subspaces
spanned by two vectors whose coordinates lie on the complex unit circle,
and then averaging over the choices of these vectors.
The second ingredient is a scalar estimate, analogous to
\eqref{eq-MB-2}, that bounds the $L^2$-norm of a polynomial from below
in terms of its leading coefficients under product measures with
bounded marginal densities.

To state these estimates precisely, we introduce some notation.
For $\mathbb{K}\in\{\mathbb{R},\mathbb{C}\}$,
let $\mathcal P_d(\mathbb K^n;\mathbb K^k)$
denote the space of all polynomial mappings
\[
f=(f_1,\ldots,f_k)\colon \mathbb K^n\to\mathbb K^k
\] 
of the form
\[
f(z)=\sum_{|\mathbf m|\le d} a_{\bf m}(f)z^{\mathbf m},
\]
where $z=(z_1,\ldots,z_n)\in\mathbb{K}^n$,
$\mathbf m=(m_1,\ldots,m_n)\in\mathbb Z_+^n$,
$|\mathbf m|=m_1+\ldots+m_n$,
$z^{\mathbf m}=z_1^{m_1}\ldots z_n^{m_n}$, and
\[
a_{\bf m}(f)=
\bigl(a_{\bf m}(f_1),\ldots,a_{\bf m}(f_k)\bigr)\in\mathbb K^k.
\]
For nonzero $f$, we set
\[
d(f):=\max\{|\mathbf m|\colon a_{\bf m}(f)\ne0\},
\]
and for the zero polynomial we set $d(0)=0$.

When $k=1$, we write $\mathcal P_d(\mathbb K^n)$ instead of
$\mathcal P_d(\mathbb K^n;\mathbb K)$. For nonzero
$f\in\mathcal P_d(\mathbb K^n)$, let
\[
[f]_2
:=
\Bigl(
\sum_{|\mathbf m|=d(f)}
|a_{\mathbf m}(f)|^2
\Bigr)^{1/2}
\]
and let $[0]_2=0$.

When $k=2$ and $d(f)\ge1$, let
$[[f]]_2$ denote the square root
of the determinant of the Gram matrix of the leading
monomial coefficients of $f_1$ and $f_2$, namely
\[
[[f]]_2^2:=
\det
\begin{pmatrix}
\sum\limits_{|\mathbf p|=d(f)}|a_{\mathbf p}(f_1)|^2 &
\sum\limits_{|\mathbf p|=d(f)}
a_{\mathbf p}(f_1)\overline{a_{\mathbf p}(f_2)}
\\
\sum\limits_{|\mathbf p|=d(f)}
\overline{a_{\mathbf p}(f_1)}a_{\mathbf p}(f_2) &
\sum\limits_{|\mathbf p|=d(f)}|a_{\mathbf p}(f_2)|^2
\end{pmatrix}.
\]
For constant $f$, we set $[[f]]_2=0$.

For fixed ${\bf r}\in{\bf N}^2_n:=
\{(r_1,r_2)\in\mathbb N^2\colon
1\le r_1<r_2\le n\}$, we define the partial Jacobian matrix by
\[
J_f^{\bf r}
:=
\begin{pmatrix}
\partial_{z_{r_1}}f_1 & \partial_{z_{r_2}}f_1
\\
\partial_{z_{r_1}}f_2 & \partial_{z_{r_2}}f_2
\end{pmatrix}.
\]

The key step in the proof of Theorem \ref{th-main}
is the following dimension-free algebraic estimate.
It improves the estimate obtained in
\cite{Kos26} by replacing the max-type coefficient quantities used there
with their natural $\ell^2$ counterparts.

\begin{theorem}\label{th-algebr}
Let $d, n\in\mathbb N$, $n\ge2$, and let
$f=(f_1,f_2)\in\mathcal P_d(\mathbb C^n;\mathbb C^2)$. Then
\[
\sum_{{\bf r}\in{\bf N}_n^2}[\det J_f^{\bf r}]_2^2
\ge
2^{-39d}[[f]]_2^2.
\]
\end{theorem}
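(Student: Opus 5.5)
The plan is to reduce to two variables by restricting to random planes with unimodular coordinates, and then to average over these planes.

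\emph{Reduction.} We may assume $f_1,f_2$ are homogeneous of degree $D=d(f)\ge1$: only top-degree parts enter $[[f]]_2$, and the degree-$(2D-2)$ parts of $\det J_f^{\bf r}$. If the top-degree part of $\det J_f^{\bf r}$ vanishes identically for all ${\bf r}$, we will show $[[f]]_2=0$; this is just the homogeneous statement with left side zero.

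\emph{Restriction to planes.} Fix $u,v\in\mathbb T^n$, where $\mathbb T$ is the complex unit circle, and set $g(s,t)=f(su+tv)$, a map $\mathbb C^2\to\mathbb C^2$. By the chain rule,
\[
\det J_g=\sum_{{\bf r}}\det J_f^{\bf r}(su+tv)\,\bigl(u_{r_1}v_{r_2}-u_{r_2}v_{r_1}\bigr).
\]
Now average $[\det J_g]_2^2$ over $u,v$ uniform on $\mathbb T^n$. Orthogonality of characters on the torus should give
\[
\mathbb E_{u,v}[\det J_g]_2^2\le C^d\sum_{\bf r}[\det J_f^{\bf r}]_2^2,
\]
since each monomial $z^{\mathbf m}$ with $|\mathbf m|=2D-2$, composed with $su+tv$, contributes to the coefficient of $s^at^b$ a sum over splittings of $\mathbf m$, with unimodular factors. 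The expected squared modulus is then controlled by multinomial counts, which are at most $2^{O(D)}$ per coefficient after normalization. I expect this to need care: the multinomial weights are not uniform. Possibly one should instead compare with a weighted norm (Bombieri-type) that is exactly rotation invariant, and convert back to the plain $\ell^2$ norm at a cost of $2^{O(d)}$ by working coordinatewise with the weights $\binom{D}{\mathbf m}$ bounded via $\mathbf m$ having entries summing to $D$.

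Symmetrically, we need a lower bound
\[
\mathbb E_{u,v}[[g]]_2^2\ge c^d[[f]]_2^2.
\]
Here the Gram determinant equals $\sum_{\mathbf p<\mathbf q}|a_{\mathbf p}(f_1)a_{\mathbf q}(f_2)-a_{\mathbf q}(f_1)a_{\mathbf p}(f_2)|^2$ (Cauchy–Binet). For $g$, the coefficient of $s^at^b$ is $\sum_{\mathbf m}a_{\mathbf m}(f)\,\sigma_{\mathbf m,a}(u,v)$, a linear functional of the coefficient vector. Averaging over the torus makes distinct monomials orthogonal, and the $2\times2$ minors of $g$ become a positive combination of minors of $f$. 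The key is to show each pair $(\mathbf p,\mathbf q)$ receives weight at least $c^D$. I would try choosing $a$ so that both $\mathbf p$ and $\mathbf q$ admit splittings, and lower-bounding the second moment of the $4$-linear expression by fourth-moment (hypercontractivity-type) estimates on the torus.

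\emph{The two-variable case.} For $n=2$, prove directly that $[\det J_g]_2^2\ge c^d[[g]]_2^2$. For homogeneous $g_1,g_2$ in $(s,t)$ of degree $D$, Euler's identity gives
\[
D\,g_1=s\,\partial_sg_1+t\,\partial_tg_1,
\]
and the Jacobian is
\[
\det J_g=\tfrac{1}{D}\cdot\frac{\{g_1,g_2\}\text{-type expression}}{}\,,
\]
a Wronskian-like bilinear map $(g_1,g_2)\mapsto \partial_sg_1\partial_tg_2-\partial_tg_1\partial_sg_2$. It vanishes iff $g_1,g_2$ are proportional. The needed quantitative version is a lower bound of this bilinear map on $\Lambda^2$ of the space of degree-$D$ binary forms, with constant $2^{-O(D)}$. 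I would prove it via the $\mathrm{SL}_2$/Bombieri-norm structure: the map factors through the $\mathrm{SL}_2$-equivariant transvectant $(g_1,g_2)_1$, whose norm on $\Lambda^2 V_D$ is computable by Clebsch–Gordan. Since $\Lambda^2V_D$ decomposes into $V_{2D-2},V_{2D-6},\dots$, the transvectant is an isometry up to explicit factorial constants. That settles only the $V_{2D-2}$ component, though, so this is where I expect the main obstacle: the first transvectant kills the other components. Hence one must use the randomness of the plane, not a fixed plane, to ensure the $V_{2D-2}$ component carries a $2^{-O(d)}$ fraction of $[[g]]_2^2$ on average. Finally, combine the three inequalities to obtain $2^{-39d}$.
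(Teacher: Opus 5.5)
You have the paper's architecture: restrict $f$ to planes $su+tv$ with $u,v$ uniform on $\mathbb T^n$, expand $\det J_g$ by the chain rule, get the upper bound from orthogonality of characters, and get the lower bound from a two-variable inequality. But the step that carries all the difficulty is missing. That step is the fixed-plane inequality
\[
[\partial_sg_1\,\partial_tg_2-\partial_tg_1\,\partial_sg_2]_2^2\ge 2^{-O(D)}[[g]]_2^2
\]
for arbitrary binary forms $g_1,g_2$ of degree $D$.

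Your transvectant argument only controls the $V_{2D-2}$-component of $g_1\wedge g_2$. You then conclude that randomness of the plane must make that component large on average, and this misplaces the difficulty. $g_1\wedge g_2$ is always a decomposable bivector, and the inequality holds for every fixed pair: it is the quantitative form of your own remark that the Jacobian vanishes only for proportional forms. Averaging over planes cannot replace it, since you would need it for exactly the pairs that arise, and you give no mechanism by which averaging would help.

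The paper proves this inequality directly (Lemma~\ref{lem-hom-1}):
\begin{itemize}
\item After Gram--Schmidt one may assume the coefficient vectors satisfy $|\alpha|=|\beta|=1$ and $\langle\alpha,\beta\rangle=0$.
\item Put $\phi(z)=g_1(z,1)$ and $\psi(z)=g_2(z,1)=A\prod_j(z-z_j)$. Euler's identity gives $(\phi/\psi)'=J(z,1)/(D\psi^2)$.
\item Average $\ln\bigl(|r-|z_j||/(1+|z_j|)\bigr)$ over $r\in[1,2]$ and use $|A|\prod_j(1+|z_j|)\ge\sum_m|\beta_m|\ge1$. This yields a circle $|z|=r$ on which $|\psi|\ge e^{-4D}$.
\item Integrating along that circle gives $\sup_{|z|=r}|\phi-c\psi|\le 2^{15D+3}[J]_2$ with $c=\phi(z_*)/\psi(z_*)$.
\item Parseval and $r\ge1$ give $\sup_{|z|=r}|\phi-c\psi|^2\ge|\alpha-c\beta|^2=1+|c|^2\ge1$, which closes the argument.
\end{itemize}

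Your other two steps need less than you propose.

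For the averaged lower bound, no hypercontractivity is needed. The coefficients of $s^D$ and $t^D$ in $g_i$ are $f_i(u)$ and $f_i(v)$, so $[[g]]_2^2\ge|f_1(u)f_2(v)-f_2(u)f_1(v)|^2$. Distinct pairs $(\mathbf p,\mathbf q)$ give distinct characters $u^{\mathbf p}v^{\mathbf q}$, so the average of the right-hand side is exactly $2[[f]]_2^2$. The paper feeds this extreme minor directly into the two-variable lemma. For middle coefficients, distinct minors of $f$ do share a character, so the averaging is not diagonal there. For example, with $D=2$ and $n=4$, the coefficient of $u_1u_2u_3v_4$ in the $(s^2,st)$-minor is a sum of three different minors of $f$.

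For the upper bound, the factor $u_{r_1}v_{r_2}-u_{r_2}v_{r_1}$ shifts characters, so terms from different index pairs and monomials collide; your sketch does not account for this. The paper bounds the multiplicity of each character by $(k+1)^2$, where $k=2D-2$. It then uses $\sum_{\mathbf l\le\mathbf m}\prod_j\binom{m_j}{l_j}^2\le 2^{2k}$ for $|\mathbf m|=k$. So the plain $\ell^2$ norm works and no Bombieri-type norm is required.
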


We note that the exponential decay in $d$ of the constant in this estimate is unavoidable, see Example~\ref{ex-exponential-decay} below.

The coefficient Gram determinant $[[f]]_2^2$ is directly related to the
Gram determinant of the highest-order derivatives.
Indeed, for $f_1,f_2\in\mathcal P_d(\mathbb R^n)$, we have
\[
\langle D^df_1,D^df_2\rangle_{(\mathbb R^n)^{\otimes d}}
=
d!\sum_{|{\bf m}|=d}
m_1!\ldots m_n!\,
a_{\bf m}(f_1)a_{\bf m}(f_2).
\]
Comparing the Cauchy--Binet expansions of the corresponding Gram
determinants and using $1\le m_1!\ldots m_n!\le d!$, we obtain,
for $f=(f_1,f_2)\in\mathcal P_d(\mathbb R^n;\mathbb R^2)$
with $d(f)=d$,
\begin{equation}\label{eq-equiv}
(d!)^2[[f]]_2^2
\le
\det G_{(\mathbb R^n)^{\otimes d}}(D^df_1,D^df_2)
\le
(d!)^4[[f]]_2^2.
\end{equation}

We combine the algebraic estimate from Theorem~\ref{th-algebr} with the following
extension of \eqref{eq-MB-2} to bounded densities.

\begin{theorem}\label{th-MB}
Let $M>0$, $d\in\mathbb N$, and let
$\mu=\mu_1\otimes\ldots\otimes\mu_n$, where each $\mu_j$ is a
probability measure on $\mathbb R$ with a density bounded by $M$.
Then, for every $f\in\mathcal P_d(\mathbb R^n)$,
one has
\[
\int_{\mathbb R^n}|f|^2\,d\mu
\ge
2^{-16d(f)}M^{-2d(f)}[f]_2^2.
\]
\end{theorem}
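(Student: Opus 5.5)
Write $f=\sum_{|\mathbf m|\le D}a_{\mathbf m}x^{\mathbf m}$ with $D=d(f)$. The proof is by induction on the number of variables $n$, peeling off one coordinate at a time with a one-dimensional estimate.

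\emph{One-dimensional step.} For a probability measure $\nu$ on $\mathbb R$ with density bounded by $M$, and a polynomial $g(t)=\sum_{k\le m}c_kt^k$, we show
\[
\int|g|^2\,d\nu\ge c^{m}M^{-2m}|c_m|^2
\]
with an explicit absolute $c$, say $c=2^{-16}$. The set $\{|g|\le\lambda\}$ is controlled by a Remez/Cartan-type bound. Write $g=c_m\prod(t-\zeta_i)$. If $|g(t)|\le\lambda$, then some factor satisfies $|t-\zeta_i|\le(\lambda/|c_m|)^{1/m}$. Hence $\{|g|\le\lambda\}$ lies in a union of at most $m$ intervals of total length at most $2m(\lambda/|c_m|)^{1/m}$. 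Alternatively, Chebyshev's inequality on an interval gives the sharper length $4(\lambda/(2|c_m|))^{1/m}$ for the sublevel set. Its $\nu$-measure is at most $M$ times this length. Choose $\lambda$ so that this measure is $\le 1/2$, i.e.\ $\lambda\approx |c_m|(8M)^{-m}$. Then $\int|g|^2d\nu\ge \tfrac12\lambda^2\ge 2^{-1}(8M)^{-2m}|c_m|^2$. For $m=0$ the bound is trivial because $\nu$ is a probability measure.

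\emph{Tensorization.} The naive induction loses information: integrating out $x_n$ only keeps the top coefficient in $x_n$, not the full top-degree layer. We therefore argue by induction on $n$ with the stronger statement for arbitrary $f$ and its total top-degree part. Expand in the last variable: $f=\sum_{k=0}^{D}g_k(x')x_n^k$, with $\deg g_k\le D-k$. Its top-degree part is $\sum_k h_k(x')x_n^k$, where $h_k$ is the homogeneous part of $g_k$ of degree exactly $D-k$. Then
\[
[f]_2^2=\sum_k[h_k]_2^2,
\]
where $[h_k]_2$ is the $\ell^2$ norm of the degree-$(D-k)$ coefficients of $g_k$. For fixed $x'$, the one-dimensional step only gives control of the leading coefficient in $x_n$, so we strengthen it. We prove a bound for every coefficient weighted by degree:
\[
\int|g|^2d\nu\ge c_0\,(8M)^{-2m}\max_k (8M)^{2k}\ldots
\]
Instead, we use the cleaner route of a bounded-density Markov-type inequality. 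For bounded-density $\nu$ (not necessarily unimodal), first show the one-dimensional inequality
\[
\int|g|^2d\nu\ge c\,\sum_{k}(aM)^{-2k}\,\frac{|g^{(k)}\text{-type coefficient}|^2}{\cdot}
\]
This is clumsy. The preferred plan is the following. Prove by induction on $n$ the inequality $\int|f|^2d\mu\ge 2^{-16D}M^{-2D}\sum_{|\mathbf m|=D}|a_{\mathbf m}|^2$ together with the one-dimensional estimate for every coefficient: for $\deg g\le m$,
\[
\int|g|^2d\nu\ge (cM)^{-2k}\cdot c'^{\,m}|c_k|^2\quad\text{for all }k\le m
\]
with a shifted normalization. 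This is obtained by the sublevel-set argument applied to $g$ after noting $|c_k|\le \binom{m}{k}\max_{|t|\le R}|g|/R^{k}$ combined with the measure being spread.

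Then apply Fubini. We have $\int|f|^2d\mu=\int\!\int|f|^2d\mu_n\,d\mu'$. For each $x'$, apply the coefficientwise one-dimensional bound averaged over $k$: $\int|f|^2d\mu_n\ge \frac1{D+1}\sum_k(cM)^{-2k}c'^{D}|g_k(x')|^2$. Then apply the inductive hypothesis to each $g_k$ in $n-1$ variables, where $d(g_k)\le D-k$. Only the $g_k$ of exact degree $D-k$ contribute, and their top parts are $h_k$. This gives $\gtrsim\sum_k (cM)^{-2k}(c''M)^{-2(D-k)}[h_k]_2^2$. The powers of $M$ combine to exactly $M^{-2D}$, and the constants are of the form $2^{-O(D)}$. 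We must avoid the factors $(D+1)^{-1}$ accumulating over $n$ steps. This is done by making the one-dimensional bound a sum, not an average: prove $\int|g|^2d\nu\ge c^m M^{-2m}\sum_k (\ldots)|c_k|^2$ directly, or apply the inductive step only through leading-in-$x_n$ structure with appropriate weights.

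\emph{Main obstacle.} The key difficulty is making the constants dimension-free. Each variable split must lose a factor depending only on the degree consumed by that variable, i.e.\ $2^{-16k}$ for the $x_n$-degree $k$, and never on $n$. That requires the one-dimensional lemma in weighted-sum form,
\[
\int|g|^2d\nu\ge\sum_{k}2^{-16(m-k)}\ldots,
\]
with no loss when $k=0$; a bare $x_n$-independent part must pass through with constant $1$. Achieving ``constant $1$ on degree-$0$ in $x_n$'' is the delicate point. I would handle it with Gram–Schmidt orthogonalization. Replace the monomials $x_n^k$ by monic polynomials $p_k$ orthogonal in $L^2(\mu_n)$, with $p_0=1$. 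Then $\int|f|^2d\mu_n=\sum_k|\tilde g_k(x')|^2\|p_k\|^2$ exactly, and the change of basis preserves the top-degree layer, since $p_k$ is monic of degree $k$. It then suffices to prove $\|p_k\|_{L^2(\nu)}^2\ge (2^{16}M^2)^{-k}$, which follows from the sublevel-set estimate applied to the monic $p_k$. The induction then closes with constant exactly $2^{-16D}M^{-2D}$.
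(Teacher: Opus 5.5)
Your final plan is essentially the paper's proof: expand in monic $L^2(\mu_j)$-orthogonal polynomials $p_k$, show $\|p_k\|_{L^2(\mu_j)}^2\ge 2^{-16k}M^{-2k}$ from a sublevel-set bound, and tensorize. In the paper the orthonormal polynomials are $u_{j,k}=a_k(u_{j,k})\,p_k$, so its key bound $|a_k(u_{j,k})|^{-2}\ge 2^{-16k}M^{-2k}$ is exactly yours. The difference is that the paper tensorizes in one step rather than by induction on $n$: for $|\mathbf m|=d(f)$, every monomial of $f$ other than $x^{\mathbf m}$ is orthogonal to $U_{\mathbf m}=\prod_j u_{j,m_j}(x_j)$, and Bessel's inequality finishes.

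The paper takes the sublevel bound from \cite{Kos25}. Your P\'olya-type bound $|\{|p|\le\lambda\}|\le 4(\lambda/2)^{1/k}$ for monic $p$ of degree $k$ works equally well. The cruder covering by $k$ intervals of length $2\lambda^{1/k}$ does not: it only gives $\|p_k\|^2\gtrsim(4kM)^{-2k}$, which is not of the form $2^{-O(k)}M^{-2k}$. Also, the intermediate ``coefficientwise'' bounds for all $c_k$ that you considered are false as stated. For instance, take $\nu$ uniform on $[A,A+1/M]$ and $g=(t-A)^m$ with $A$ large. Only top-degree coefficients can be controlled, which is all your final plan uses.

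The step you omit is the existence of the orthogonal polynomials. Gram--Schmidt in $L^2(\mu_n)$ requires $\int|t|^{2D}\,d\mu_n<\infty$, and a density bounded by $M$ does not guarantee this. The paper first assumes each $\mu_j$ is compactly supported. It then applies that case to the normalized restrictions $\mu_{j,R}$ of $\mu_j$ to $[-R,R]$, whose densities are bounded by $M_R\to M$, and lets $R\to\infty$ by monotone convergence.

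Alternatively, let $K$ be the degree of $f$ in $x_n$ and suppose $\int|t|^{2K}\,d\mu_n=\infty$. The coefficient of $x_n^K$ is a nonzero polynomial in $x'$, hence nonzero $\mu'$-almost everywhere, so $\int|f|^2\,d\mu=\infty$ and the inequality is trivial. Thus you only ever need moments that are finite. With either patch, your induction closes with the constant $2^{-16D}M^{-2D}$.
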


Theorems~\ref{th-algebr} and~\ref{th-MB} together imply
Theorem~\ref{th-main}. Indeed, let $\mu$ be as in
Theorem~\ref{th-main}. By the Cauchy--Binet formula,
\[
\det G_{\mathbb R^n}(\nabla f_1,\nabla f_2)
=
\sum_{{\bf r}\in{\bf N}_n^2}|\det J_f^{\bf r}|^2.
\]
Each polynomial $\det J_f^{\bf r}$ has degree at most $2d-2\le2d$.
Hence, Theorem~\ref{th-MB} yields
\[
\int_{\mathbb R^n}|\det J_f^{\bf r}|^2\,d\mu
\ge
2^{-32d}[\det J_f^{\bf r}]_2^2.
\]
Summing these estimates and applying Theorem~\ref{th-algebr}, we obtain
\[
\int_{\mathbb R^n}
\det G_{\mathbb R^n}(\nabla f_1,\nabla f_2)\,d\mu
\ge
2^{-32d}
\sum_{{\bf r}\in{\bf N}_n^2}[\det J_f^{\bf r}]_2^2
\ge
2^{-71d}[[f]]_2^2.
\]
If $d(f)<d$, then $D^df_1=D^df_2=0$, and the assertion of
Theorem~\ref{th-main} is immediate. Otherwise, $d(f)=d$,
and \eqref{eq-equiv} completes the proof.

\subsection{Structure of the paper}

The rest of the paper is organized as follows. In
Section~\ref{sect-algebr}, we prove the key algebraic estimate,
Theorem~\ref{th-algebr}. In Section~\ref{sect-MB}, we prove
Theorem~\ref{th-MB}. Finally, in Section~\ref{sect-exampl}, we show that the corresponding
extensions of Theorem~\ref{th-main} to $\mathbb R^3$-valued polynomial
mappings and to non-product isotropic log-concave measures fail in general,
and that the exponential decay in the degree of the polynomial in
Theorem~\ref{th-algebr} is unavoidable.

\section{Algebraic estimate}
\label{sect-algebr}

\subsection{Lemmas on homogeneous polynomials}

\begin{lemma}\label{lem-hom-1}
Let $d\in \mathbb{N}$ and let
\[
U(t,s)
=
\sum_{m=0}^d\alpha_mt^ms^{d-m}
\quad\text{and}\quad
V(t,s)
=
\sum_{m=0}^d\beta_mt^ms^{d-m},
\]
with $\alpha_m,\beta_m\in\mathbb C$. 
Then
\[
[\partial_tU\partial_sV-\partial_tV\partial_sU]_2^2
\ge
2^{-30 d-6}\sum_{0\le r<m\le d}
|\alpha_m\beta_r-\alpha_r\beta_m|^2.
\]
In particular, 
\[
\bigl[\partial_tU\partial_sV-\partial_tV\partial_sU\bigr]_2^2
\ge
2^{-30 d-6}
\bigl|U(1, 0)V(0, 1) - U(0, 1)V(1, 0)\bigr|^2.
\]
\end{lemma}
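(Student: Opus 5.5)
The plan is to compute $\partial_tU\partial_sV-\partial_tV\partial_sU$ explicitly in terms of the $2\times2$ minors $\Delta_{mr}:=\alpha_m\beta_r-\alpha_r\beta_m$, and then reduce to a one-variable statement about the Wronskian $u'v-uv'$. Write $W:=\partial_tU\partial_sV-\partial_tV\partial_sU$. Collecting the coefficient of $t^{k-1}s^{2d-1-k}$ in $W$, the contributions $m(d-r)\alpha_m\beta_r$ and $-r(d-m)\alpha_m\beta_r$ combine to $d(m-r)\alpha_m\beta_r$. Hence
\[
W(t,s)=d\sum_{k=1}^{2d-1}t^{k-1}s^{2d-1-k}\sum_{\substack{0\le r<m\le d\\ m+r=k}}(m-r)\,\Delta_{mr}.
\]
Dehomogenizing, with $u(x)=U(x,1)$ and $v(x)=V(x,1)$, this says $W(x,1)=d\,(u'v-uv')$. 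Since $W$ is homogeneous, $[W]_2^2=d^2\|u'v-uv'\|^2$, where $\|\cdot\|$ is the $\ell^2$ norm of the coefficients. By Parseval, this norm equals the $L^2$ norm on the unit circle $|x|=1$. The second ("in particular") inequality then follows from the first, since $|U(1,0)V(0,1)-U(0,1)V(1,0)|=|\Delta_{d0}|$ is one of the minors.

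Next I reduce by a change of basis. Both sides of the first inequality are multiplied by $|\det A|^2$ when $(U,V)$ is replaced by $(aU+bV,cU+dV)$ with $A=\begin{pmatrix}a&b\\c&d\end{pmatrix}\in GL_2(\mathbb C)$. Indeed, the right-hand side is the Gram determinant of the coefficient vectors (Cauchy--Binet), and $W$ is bilinear and antisymmetric in $(U,V)$. So I may assume the coefficient vectors $\alpha,\beta$ are orthonormal, and the task becomes
\[
\|u'v-uv'\|_{L^2(|x|=1)}^2\ge 2^{-30d-6}d^{-2}.
\]
Using a further unitary change within the span, I would also arrange an echelon form. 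Let $p$ be the smallest index where $(\alpha_p,\beta_p)\neq0$. I choose the basis so that $\beta_p=0$, and $v=x^q\tilde v$ with $q>p$ and $\tilde v(0)\ne0$.

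The main step is a quantitative nonvanishing of the Wronskian, and I expect this to be the main obstacle. First I would factor out the common part: write $u=ga$ and $v=gb$ with $g=\gcd(u,v)$, so that
\[
u'v-uv'=g^2(a'b-ab').
\]
Products would be handled with a Gelfond/Mahler-type inequality $\|fh\|\ge 2^{-\deg f-\deg h}\|f\|\,\|h\|$, obtained by comparing $L^2$ norms on the circle with Mahler measures. This lets me pass between $\|g^2(a'b-ab')\|$ and $\|g\|^2\|a'b-ab'\|$, and similarly between $\|u\|\,\|v\|$ and $\|g\|^2\|a\|\,\|b\|$, losing only factors $2^{-O(d)}$.

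It then remains to bound $\|a'b-ab'\|$ from below for coprime $a,b$ that are nearly orthonormal up to the normalization above. Here I would use the echelon structure. The lowest-order coefficient of $a'b-ab'$ is $(q-p)$ times a product of lowest coefficients of $a$ and $b$. Symmetrically, the top-order coefficient is controlled by the leading terms. If these extremal coefficients are small, I would perturb the chosen point: replace $x=0$ by a point $\zeta$ of the circle, via $x\mapsto x+\zeta$ or a Möbius rotation, and bound the distortion of the $\ell^2$ norms by $2^{O(d)}$. I would then pick $\zeta$ where $|v(\zeta)|$ and the relevant leading terms are not exponentially small, which is possible by Mahler-measure or Jensen averaging on the circle. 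If this route becomes too lossy, the fallback is to prove the bound directly on the decomposable cone $\{\alpha\wedge\beta\}$ by induction on $d$. The induction would differentiate in $t$, since $\partial_t$ of $(U,V)$ gives a pair of degree $d-1$ whose minors are $m r\,\Delta_{mr}$, and track a constant of the form $2^{-30}$ per degree.
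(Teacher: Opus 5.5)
\textbf{There is a genuine gap.} Your preliminary steps are correct and match the paper's: the identity $W(x,1)=d\,(u'v-uv')$, the $GL_2(\mathbb C)$-equivariance reducing to orthonormal $\alpha,\beta$, and the deduction of the second inequality from the minor $\Delta_{d0}$. The gap is the main step, which you flag as the obstacle but do not resolve.

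In an echelon form, both extremal coefficients of the Wronskian can be tiny while its norm is of order one. Take $u=\varepsilon+x+\varepsilon x^3$, $v=x^2$; these are coprime, with orthogonal coefficient vectors of norm $\approx1$. Then $u'v-uv'=-2\varepsilon x-x^2+\varepsilon x^4$.

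Your remedy of moving the base point to some $\zeta$ is circular. Re-echelonizing at $\zeta$ means replacing $u$ by $u-c_\zeta v$ with $c_\zeta=u(\zeta)/v(\zeta)$. The lowest-order coefficient is then $(u-c_\zeta v)'(\zeta)\,v(\zeta)=(u'v-uv')(\zeta)$. So choosing $\zeta$ ``where the relevant terms are not exponentially small'' is exactly the assertion that the Wronskian is not small somewhere, which is the statement to be proved. Jensen averaging of $\ln|u'v-uv'|$ would need a lower bound for its Mahler measure, and up to $2^{O(d)}$ that is the same as the desired bound for its norm.

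The gcd factorization and Gelfond's inequality do not help either. Coprimality carries no quantitative information, since nearly common roots are allowed, and the estimate does not need it. The inductive fallback is not yet an argument: you do not say how $[\partial_tU\partial_sV-\partial_tV\partial_sU]_2$ would be controlled by the analogous quantity for $(\partial_tU,\partial_tV)$, and $\partial_t$ annihilates all minors $\Delta_{m0}$. Finally, with several unspecified $2^{O(d)}$ losses, your route would not reach the stated constant $2^{-30d-6}$ even if completed.

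\textbf{The missing idea is global rather than pointwise.} Two facts combine: $u'v-uv'=v^2(u/v)'$, and orthonormality says that no combination $u-cv$ is small.

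First, find a good circle for $v$; this is where a Jensen-type averaging belongs, applied to $v$ alone. The paper writes $v=A\prod_j(z-z_j)$ and notes $|A|\prod_j(1+|z_j|)\ge|\beta|=1$. It then uses $\int_1^2\ln\frac{|r-|z_j||}{1+|z_j|}\,dr\ge-4$ to find $r\in[1,2]$ with $|v|\ge e^{-4d}$ on $|z|=r$.

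Second, integrate $(u/v)'=W(z,1)/(d\,v^2)$ along this circle from a point $z_*$, and set $c=u(z_*)/v(z_*)$. This gives
\[
\sup_{|z|=r}|u-cv|\le\frac{4\pi}{d}\,\frac{\max_{|z|=r}|v|\cdot\max_{|z|=r}|W(z,1)|}{\bigl(\min_{|z|=r}|v|\bigr)^2}\le2^{15d+3}[W]_2 .
\]
On the other hand, $r\ge1$ and $\langle\alpha,\beta\rangle=0$ give
\[
1\le1+|c|^2=\sum_{m=0}^d|\alpha_m-c\beta_m|^2\le\sum_{m=0}^d|\alpha_m-c\beta_m|^2r^{2m}\le\sup_{|z|=r}|u-cv|^2 .
\]
Together these yield $[W]_2^2\ge2^{-30d-6}$ directly, with no gcd, echelon form, or change of base point. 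If the Wronskian were small on the whole circle, $u/v$ would be nearly constant there, which orthonormality forbids.
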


\begin{proof}
If $d=1$, then
\[
\partial_tU\partial_sV-\partial_tV\partial_sU
=
\alpha_1\beta_0-\alpha_0\beta_1
\]
and the assertion holds with the constant $1\ge 2^{-36}$. We therefore assume that $d\ge2$.

Let
\[
\alpha=(\alpha_0,\ldots,\alpha_d)
\quad\text{and}\quad
\beta=(\beta_0,\ldots,\beta_d),
\]
and let
\[
\langle\alpha,\beta\rangle
=
\sum_{m=0}^d\alpha_m\overline{\beta_m}.
\]
We first consider the case
\begin{equation}\label{eq-assumpt}
|\alpha|=|\beta|=1,
\quad
\langle\alpha,\beta\rangle=0.
\end{equation}

Let
\[
u(z):=U(z,1)
\quad\text{and}\quad
v(z):=V(z,1).
\]
Then $v$ is a polynomial of degree $k\le d$
and it can be written as
\[
v(z)= A\prod_{j=1}^{k}(z- z_j).
\]
In particular,
\begin{equation}\label{eq-control}
1=|\beta|\le\sum_{m=0}^d|\beta_m|
\le |A|\sum_{S\subset \{1, \ldots, k\}}\prod_{j\in S}|z_j|= 
|A|\prod_{j=1}^{k}(1+|z_j|).
\end{equation}

We note that, for $t_0\in [0, 1]$,
\begin{align*}
\int_0^1\ln|t-t_0|\, dt 
&= \int_0^{t_0}\ln(t_0-t)\, dt + \int_{t_0}^1\ln(t-t_0)\, dt
\\
&= \int_0^{t_0}\ln t\, dt + \int_0^{1-t_0}\ln t\, dt
\ge 2\int_0^1\ln t\, dt = -2.
\end{align*}
If $t_0>1$, then
\[
\int_0^1\ln|t-t_0|\, dt
= \int_0^1\ln(t_0-t)\, dt\ge 
\int_0^1\ln(1-t)\, dt = -1\ge -2,
\]
and if $t_0<0$, similarly,
\[
\int_0^1\ln|t-t_0|\, dt
= \int_0^1\ln(t-t_0)\, dt\ge 
\int_0^1\ln t\, dt = -1\ge -2.
\]
Therefore, when $0\le |z_j|\le 3$, we have
\[
\int_1^2\ln\frac{|r - |z_j||}{1+|z_j|}\, dr
= \int_0^1\ln|t - (|z_j|-1)|\, dt - \ln(1+|z_j|)
\ge -4.
\]
When $|z_j|>3$ and $r\in [1, 2]$, 
\[
\frac{|r - |z_j||}{1+|z_j|} = 
\frac{|z_j| - r}{1+|z_j|} = 1 - \frac{r+1}{1+|z_j|}\ge \frac{1}{4}
\]
and 
\[
\int_1^2\ln\frac{|r - |z_j||}{1+|z_j|}\, dr\ge -\ln 4\ge -4.
\]
Thus, for every $j\in \{1, \ldots, k\}$, we have
\[
\int_1^2\ln\frac{|r - |z_j||}{1+|z_j|}\, dr\ge -4
\]
and therefore
\[
\int_1^2\ln\prod_{j=1}^k\frac{|r - |z_j||}{1+|z_j|}\, dr\ge -4k.
\]
Hence, there exists $r\in [1, 2]$, such that
\[
\prod_{j=1}^k\frac{|r - |z_j||}{1+|z_j|} \ge e^{-4k}\ge e^{-4d}.
\]
By \eqref{eq-control}, for $|z|=r$, we have
\begin{equation}\label{eq-lower}
|v(z)| \ge |A|
\prod_{j=1}^{k}|r - |z_j||\ge e^{-4d}.
\end{equation}
In particular, $v$ has no zeros on this circle.

Let
\[
J(t,s):=\partial_tU\partial_sV-\partial_tV\partial_sU.
\]
By homogeneity of $U$ and $V$,
\[
\partial_sU(z,1)=du(z)-zu'(z)
\quad\text{and}\quad
\partial_sV(z,1)=dv(z)-zv'(z).
\]
Hence,
\begin{align*}
J(z,1)=
(\partial_tU\partial_sV-\partial_tV\partial_sU)(z,1)
&=
u'(z)\bigl(dv(z)-zv'(z)\bigr)
-
v'(z)\bigl(du(z)-zu'(z)\bigr)
\\
&=
d\bigl(u'(z)v(z)-v'(z)u(z)\bigr)
\end{align*}
and therefore
\[
\Bigl(\frac{u}{v}\Bigr)'(z)=\frac{J(z,1)}{d(v(z))^2}
\]
wherever $v(z)\ne0$. 
Fix $z_*$ on the circle $|z|=r$
and set $c:=\frac{u(z_*)}{v(z_*)}$. Joining $z_*$ to any other point
of this circle by an arc of length at most $4\pi$, we obtain
\[
\sup_{|z|=r}|u(z)-cv(z)|
\le
\frac{4\pi}{d}\,
\frac{
\max\limits_{|z|=r}|v(z)|
\max\limits_{|z|=r}|J(z,1)|
}{
\bigl(\min\limits_{|z|=r}|v(z)|\bigr)^2
}.
\]

Since $r\le2$ and $|\beta|=1$, the Cauchy--Schwarz inequality gives
\[
\sup_{|z|=r}|v(z)|
\le
\biggl(\sum_{m=0}^d r^{2m}\biggr)^{1/2}
\le
\sqrt{d+1}\,2^d.
\]
Writing
\[
J(t,s)=\sum_{j=0}^{2d-2}B_jt^js^{2d-2-j},
\]
we similarly obtain
\[
\sup_{|z|=r}|J(z,1)|
\le
\biggl(\sum_{j=0}^{2d-2}|B_j|^2\biggr)^{1/2}
\biggl(\sum_{j=0}^{2d-2}r^{2j}\biggr)^{1/2}
\le
\sqrt{2d-1}\,2^{2d-2}[J]_2.
\]
Thus, using \eqref{eq-lower}, we obtain
\[
\sup_{|z|=r}|u(z)-cv(z)|
\le
\frac{\pi\sqrt{(d+1)(2d-1)}}{d}
(8e^8)^d[J]_2\le 8 (8e^8)^d[J]_2
\le 2^{15 d+3}[J]_2.
\]

On the other hand, the assumptions \eqref{eq-assumpt} imply
\[
\sum_{m=0}^d|\alpha_m-c\beta_m|^2
=|\alpha -c\beta|^2
=
1+|c|^2
\ge1.
\]
Since $r\ge1$, we have
\begin{align*}
1&\le
\sum_{m=0}^d|\alpha_m-c\beta_m|^2
\le
\sum_{m=0}^d|\alpha_m-c\beta_m|^2r^{2m}
\\
&=
\frac1{2\pi}\int_0^{2\pi}
|u(re^{i\theta})-cv(re^{i\theta})|^2\,d\theta
\le
\sup_{|z|=r}|u(z)-cv(z)|^2.
\end{align*}
Consequently,
\begin{equation}\label{eq-inter}
[\partial_tU\partial_sV-\partial_tV\partial_sU]_2^2
=[J]_2^2
\ge
2^{-30 d-6}
\end{equation}
under the assumption \eqref{eq-assumpt}.

For general coefficient vectors $\alpha$ and $\beta$,
by the Cauchy--Binet formula,
\[
\sum_{0\le r<m\le d}
|\alpha_m\beta_r-\alpha_r\beta_m|^2
=
|\alpha|^2|\beta|^2-|\langle\alpha,\beta\rangle|^2.
\]
If this quantity is zero, there is nothing to prove. Otherwise, set
\[
c_0:=\frac{\langle\beta,\alpha\rangle}{|\alpha|^2},
\quad
\beta^\perp:=\beta-c_0\alpha.
\]
Then
$\langle\alpha,\beta^\perp\rangle=0$
and
\[
\sum_{0\le r<m\le d}
|\alpha_m\beta_r-\alpha_r\beta_m|^2
=
|\alpha|^2|\beta^\perp+c_0\alpha|^2-|\langle\alpha,\beta^\perp + c_0\alpha\rangle|^2
=
|\alpha|^2|\beta^\perp|^2.
\]
Since 
\[
\partial_t\Bigl(\frac{U}{|\alpha|}\Bigr)\partial_s\Bigl(\frac{V-c_0U}{|\beta^\perp|}\Bigr)-\partial_t\Bigl(\frac{V-c_0U}{|\beta^\perp|}\Bigr)\partial_s\Bigl(\frac{U}{|\alpha|}\Bigr)
= \frac{\partial_tU\partial_sV-\partial_tV\partial_sU}{|\alpha||\beta^\perp|},
\]
the assertion follows from the 
already proved inequality \eqref{eq-inter} under the assumption \eqref{eq-assumpt}.
\end{proof}

\begin{lemma}\label{lem-hom-2}
Let $k\in \mathbb{Z}_+$ and let
\[
H_{pq}(z)
=
\sum_{|\mathbf m|=k} a_{\bf m}(H_{pq})z^{\mathbf m},\quad 1\le p,q\le n,
\]
be a collection of degree $k$ homogeneous polynomials in $n$ variables with complex coefficients.
Let
\[
e^{2\pi ix}:=(e^{2\pi ix_1}, \ldots, e^{2\pi ix_n})
\quad\text{and}\quad
e^{2\pi iy}:=(e^{2\pi iy_1}, \ldots, e^{2\pi iy_n})
\]
and let
\[
K_{x,y}(t,s):=\sum_{1\le p,q\le n}e^{2\pi i(x_p+y_q)}
H_{pq}(t e^{2\pi ix}+se^{2\pi iy}).
\]
Then
\[
\int_{[0,1]^{2n}} [K_{x,y}]_2^2\,dxdy
\le
2^{2k}(k+1)^3
\sum_{1\le p,q\le n}[H_{pq}]_2^2.
\]
\end{lemma}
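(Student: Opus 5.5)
The plan is to compute $[K_{x,y}]_2^2$ explicitly as a sum of squared moduli of trigonometric polynomials in $(x,y)$, and then integrate using Parseval's identity on the torus $[0,1]^{2n}$.

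\textbf{Step 1: the coefficients of $K_{x,y}$.} Since every $H_{pq}$ is homogeneous of degree $k$, $K_{x,y}$ is a binary form of degree $k$ in $(t,s)$. Hence $[K_{x,y}]_2^2=\sum_{j=0}^k|c_j(x,y)|^2$, where $c_j$ is the coefficient of $t^js^{k-j}$. Write $a=e^{2\pi ix}$ and $b=e^{2\pi iy}$. By the multinomial expansion,
\[
(ta+sb)^{\mathbf m}=\sum_{\mathbf l\le\mathbf m}\binom{\mathbf m}{\mathbf l}t^{|\mathbf l|}s^{k-|\mathbf l|}a^{\mathbf l}b^{\mathbf m-\mathbf l},
\qquad \binom{\mathbf m}{\mathbf l}=\prod_i\binom{m_i}{l_i}.
\]
Therefore
\[
c_j(x,y)=\sum_{p,q}\sum_{|\mathbf m|=k}\ \sum_{\mathbf l\le\mathbf m,\,|\mathbf l|=j}a_{\mathbf m}(H_{pq})\binom{\mathbf m}{\mathbf l}\,e^{2\pi i\langle x,\mathbf l+e_p\rangle}e^{2\pi i\langle y,\mathbf m-\mathbf l+e_q\rangle}.
\]
So $c_j$ is a trigonometric polynomial whose frequencies are the pairs $(\boldsymbol\alpha,\boldsymbol\beta)=(\mathbf l+e_p,\mathbf m-\mathbf l+e_q)$, with $|\boldsymbol\alpha|=j+1$ and $|\boldsymbol\beta|=k-j+1$.

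\textbf{Step 2: Parseval and handling collisions.} This is the main obstacle: different tuples $(p,q,\mathbf m,\mathbf l)$ can produce the same frequency. By Parseval,
\[
\int|c_j|^2\,dx\,dy=\sum_{(\boldsymbol\alpha,\boldsymbol\beta)}\Bigl|\sum a_{\mathbf m}(H_{pq})\binom{\mathbf m}{\mathbf l}\Bigr|^2,
\]
where the inner sum runs over the tuples mapping to $(\boldsymbol\alpha,\boldsymbol\beta)$. Such a tuple is determined by $(p,q)$, because $\mathbf l=\boldsymbol\alpha-e_p$ and $\mathbf m=\boldsymbol\alpha+\boldsymbol\beta-e_p-e_q$. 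Moreover $p$ must lie in $\operatorname{supp}\boldsymbol\alpha$ and $q$ in $\operatorname{supp}\boldsymbol\beta$. Hence each frequency receives at most $(j+1)(k-j+1)$ contributions. By Cauchy--Schwarz,
\[
\int|c_j|^2\le (j+1)(k-j+1)\sum_{p,q}\sum_{|\mathbf m|=k}|a_{\mathbf m}(H_{pq})|^2\sum_{\mathbf l\le\mathbf m,\,|\mathbf l|=j}\binom{\mathbf m}{\mathbf l}^2.
\]
Here each tuple is counted once, since the map from tuples to frequencies, once grouped, is a partition.

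\textbf{Step 3: summing over $j$.} Use $(j+1)(k-j+1)\le (k+2)^2/4\le (k+1)^2$. Then
\[
\sum_j\sum_{|\mathbf l|=j,\,\mathbf l\le\mathbf m}\binom{\mathbf m}{\mathbf l}^2\le\Bigl(\sum_{\mathbf l\le\mathbf m}\binom{\mathbf m}{\mathbf l}\Bigr)^2=(2^{k})^2=2^{2k}.
\]
Combining these,
\[
\int[K_{x,y}]_2^2\,dx\,dy\le(k+1)^2\,2^{2k}\sum_{p,q}[H_{pq}]_2^2\le 2^{2k}(k+1)^3\sum_{p,q}[H_{pq}]_2^2,
\]
where $[H_{pq}]_2^2=\sum_{|\mathbf m|=k}|a_{\mathbf m}(H_{pq})|^2$ by homogeneity. (If some $H_{pq}$ is zero, it contributes nothing, so this identity holds for every term.) This gives the bound of Lemma~\ref{lem-hom-2} with room to spare.
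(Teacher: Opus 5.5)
Your proof is correct and follows essentially the same route as the paper. You expand the coefficient of $t^js^{k-j}$ as a trigonometric polynomial and note that each frequency $(\mathbf u,\mathbf v)$ comes from at most $|\operatorname{supp}\mathbf u|\cdot|\operatorname{supp}\mathbf v|$ tuples, because $(p,q)$ determines $\mathbf l$ and $\mathbf m$. Then you apply Parseval, Cauchy--Schwarz and $\sum_{\mathbf l\le\mathbf m}\binom{\mathbf m}{\mathbf l}^2\le 2^{2k}$. The one difference is that you sum the squared binomial coefficients over all $\mathbf l\le\mathbf m$ at once, whereas the paper bounds each degree slice separately by $2^{2k}$; this saves a factor $k+1$ and yields the sharper constant $2^{2k}(k+1)^2$.
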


\begin{proof}
We note that
\begin{align*}
H_{pq}(t e^{2\pi ix}+se^{2\pi iy})
&=
\sum_{|\mathbf m|=k} a_{\mathbf m}(H_{pq})
\prod_{j=1}^n(te^{2\pi ix_j}+se^{2\pi iy_j})^{m_j}
\\
&=
\sum_{|\mathbf m|=k} a_{\mathbf m}(H_{pq})
\sum_{r_1=0}^{m_1}\ldots
\sum_{r_n=0}^{m_n}
\prod_{j=1}^n\binom{m_j}{r_j}
t^{|\mathbf r|}s^{k-|\mathbf r|}
e^{2\pi i(\langle x,\mathbf r\rangle+
\langle y,\mathbf m-\mathbf r\rangle)}.
\end{align*}
Hence,
\begin{align*}
K_{x,y}(t,s)
&=
\sum_{1\le p,q\le n}
\sum_{|\mathbf m|=k} a_{\mathbf m}(H_{pq})
\sum_{r_1=0}^{m_1}\ldots
\sum_{r_n=0}^{m_n}
\prod_{j=1}^n\binom{m_j}{r_j}
t^{|\mathbf r|}s^{k-|\mathbf r|}
e^{2\pi i(\langle x,\mathbf r+e_p\rangle+
\langle y,\mathbf m-\mathbf r+e_q\rangle)}
\\
&=
\sum_{\ell=0}^k
\biggl(
\sum_{1\le p,q\le n}
\sum_{|\mathbf m|=k}
\sum_{\substack{|\mathbf r|=\ell\\0\le r_j\le m_j}}
a_{\mathbf m}(H_{pq})
\prod_{j=1}^n\binom{m_j}{r_j}
e^{2\pi i(\langle x,\mathbf r+e_p\rangle+
\langle y,\mathbf m-\mathbf r+e_q\rangle)}
\biggr)
t^\ell s^{k-\ell}.
\end{align*}
Consider the trigonometric polynomial
\[
K_\ell(x,y):=
\sum_{1\le p,q\le n}
\sum_{|\mathbf m|=k}
\sum_{\substack{|\mathbf r|=\ell\\0\le r_j\le m_j}}
a_{\mathbf m}(H_{pq})
\prod_{j=1}^n\binom{m_j}{r_j}
e^{2\pi i(\langle x,\mathbf r+e_p\rangle+
\langle y,\mathbf m-\mathbf r+e_q\rangle)}.
\]
We note that distinct functions
\[
e^{2\pi i(\langle x,\mathbf u\rangle+
\langle y,\mathbf v\rangle)}
\]
are orthonormal in $L^2([0, 1]^{2n})$.
We now estimate how many times each element of this orthonormal family appears
in the sum $K_\ell$. To do this, for fixed
$\mathbf u,\mathbf v\in\mathbb Z_+^n$, let
$\mathbf m,\mathbf r,p,q$ be such that
\[
\mathbf u=\mathbf r+e_p,
\quad
\mathbf v=\mathbf m-\mathbf r+e_q,
\]
where $1\le p,q\le n$, $|\mathbf m|=k$, $|\mathbf r|=\ell$, and
$r_j\le m_j$ for every $j\in\{1,\ldots,n\}$.
In this case, necessarily
\[
p\in\operatorname{supp}\mathbf u
\quad\text{and}\quad
q\in\operatorname{supp}\mathbf v,
\]
and $\mathbf m$ and $\mathbf r$ are uniquely defined once $p$ and $q$
are chosen, since
\[
\mathbf r=\mathbf u-e_p
\quad\text{and}\quad
\mathbf m=\mathbf v+\mathbf r-e_q
=\mathbf v+\mathbf u-e_p-e_q.
\]
Therefore, the element
$e^{2\pi i(\langle x,\mathbf u\rangle+
\langle y,\mathbf v\rangle)}$
appears in the sum for $K_\ell$ no more than
$|\operatorname{supp}\mathbf u|\cdot|\operatorname{supp}\mathbf v|$
times.
We have
\[
|\operatorname{supp}\mathbf u|
\le |\mathbf u|
=
|\mathbf r|+1
\le k+1
\]
and
\[
|\operatorname{supp}\mathbf v|
\le |\mathbf v|
=
|\mathbf m|-|\mathbf r|+1
\le k+1.
\]
Hence,
\[
|\operatorname{supp}\mathbf u|\cdot|\operatorname{supp}\mathbf v|
\le (k+1)^2.
\]
Thus, each element
$e^{2\pi i(\langle x,\mathbf u\rangle+
\langle y,\mathbf v\rangle)}$
of the orthonormal family appears in the sum for $K_\ell$ no more than
$(k+1)^2$ times, and therefore, by the Cauchy--Schwarz inequality,
\[
\int_{[0,1]^{2n}}|K_\ell(x,y)|^2\,dxdy
\le
(k+1)^2
\sum_{1\le p,q\le n}
\sum_{|\mathbf m|=k}
|a_{\mathbf m}(H_{pq})|^2
\sum_{\substack{|\mathbf r|=\ell\\0\le r_j\le m_j}}
\prod_{j=1}^n\binom{m_j}{r_j}^2.
\]
For the last sum we have
\[
\sum_{\substack{|\mathbf r|=\ell\\0\le r_j\le m_j}}
\prod_{j=1}^n\binom{m_j}{r_j}^2
\le
\biggl(
\sum_{r_1=0}^{m_1}\ldots
\sum_{r_n=0}^{m_n}
\prod_{j=1}^n\binom{m_j}{r_j}
\biggr)^2
=
2^{2|\mathbf m|}
=
2^{2k}.
\]
Thus,
\[
\int_{[0,1]^{2n}}[K_{x,y}]_2^2\,dxdy
=
\sum_{\ell=0}^k
\int_{[0,1]^{2n}}|K_\ell(x,y)|^2\,dxdy
\le
2^{2k}(k+1)^3
\sum_{1\le p,q\le n}[H_{pq}]_2^2,
\]
which gives the announced estimate.
\end{proof}

\begin{lemma}\label{lem-hom-3}
Let $d\in \mathbb{N}$ and
let $f=(f_1,f_2)\in \mathcal{P}_d(\mathbb{C}^n;\mathbb{C}^2)$,
\[
f_1(z)=\sum_{|\mathbf m|=d} a_{\bf m}(f_1)z^{\mathbf m}
\quad\text{and}\quad
f_2(z)=\sum_{|\mathbf m|=d} a_{\bf m}(f_2)z^{\mathbf m}.
\]
Then
\[
\sum_{{\bf r}\in{\bf N}_n^2}[\det J_f^{\bf r}]_2^2
\ge
2^{-39 d}[[f]]_2^2.
\]
\end{lemma}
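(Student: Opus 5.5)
The plan is to reduce to two variables by restricting $f$ to the random complex plane spanned by $e^{2\pi ix}$ and $e^{2\pi iy}$, then bound the average of $[J]_2^2$ from below using Lemma~\ref{lem-hom-1} and from above using Lemma~\ref{lem-hom-2}.

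\textbf{Setup.} For $x,y\in[0,1]^n$ set
\[
U(t,s):=f_1(te^{2\pi ix}+se^{2\pi iy}),\qquad V(t,s):=f_2(te^{2\pi ix}+se^{2\pi iy}).
\]
Both are homogeneous of degree $d$ in $(t,s)$. By the chain rule, $\partial_tU=\sum_p e^{2\pi ix_p}(\partial_pf_1)(\cdot)$ and $\partial_sU=\sum_q e^{2\pi iy_q}(\partial_qf_1)(\cdot)$, and similarly for $V$. Hence
\[
J_{x,y}:=\partial_tU\partial_sV-\partial_tV\partial_sU
=\sum_{1\le p,q\le n}e^{2\pi i(x_p+y_q)}\bigl(\partial_pf_1\partial_qf_2-\partial_pf_2\partial_qf_1\bigr)(te^{2\pi ix}+se^{2\pi iy}).
\]
This is exactly $K_{x,y}$ of Lemma~\ref{lem-hom-2} with the homogeneous polynomials
\[
H_{pq}:=\partial_pf_1\partial_qf_2-\partial_qf_1\partial_pf_2,
\]
of degree $k=2d-2$ (the case $d=1$ is trivial, or is included with $k=0$). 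Since $H_{pp}=0$ and $H_{qp}=-H_{pq}=-\det J_f^{(p,q)}$ for $p<q$, we get $\sum_{p,q}[H_{pq}]_2^2=2\sum_{{\bf r}\in{\bf N}_n^2}[\det J_f^{\bf r}]_2^2$. Lemma~\ref{lem-hom-2} therefore yields
\[
\int_{[0,1]^{2n}}[J_{x,y}]_2^2\,dxdy\le 2^{4d-3}(2d-1)^3\sum_{{\bf r}\in{\bf N}_n^2}[\det J_f^{\bf r}]_2^2 .
\]

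\textbf{Lower bound.} Apply the ``in particular'' part of Lemma~\ref{lem-hom-1} pointwise in $(x,y)$:
\[
[J_{x,y}]_2^2\ge 2^{-30d-6}\bigl|f_1(e^{2\pi ix})f_2(e^{2\pi iy})-f_1(e^{2\pi iy})f_2(e^{2\pi ix})\bigr|^2 .
\]
The right-hand side is the squared modulus of the trigonometric polynomial
\[
\sum_{|\mathbf m|=|\mathbf p|=d}\bigl(a_{\mathbf m}(f_1)a_{\mathbf p}(f_2)-a_{\mathbf p}(f_1)a_{\mathbf m}(f_2)\bigr)e^{2\pi i(\langle x,\mathbf m\rangle+\langle y,\mathbf p\rangle)}.
\]
Its frequencies $(\mathbf m,\mathbf p)$ are all distinct, so by orthonormality of the exponentials its integral over $[0,1]^{2n}$ equals
\[
\sum_{\mathbf m,\mathbf p}|a_{\mathbf m}(f_1)a_{\mathbf p}(f_2)-a_{\mathbf p}(f_1)a_{\mathbf m}(f_2)|^2 .
\]
By the Lagrange (Cauchy--Binet) identity this sum equals $2\bigl(\|a(f_1)\|^2\|a(f_2)\|^2-|\langle a(f_1),a(f_2)\rangle|^2\bigr)=2[[f]]_2^2$.

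\textbf{Combining.} Integrating the pointwise bound and comparing with the upper bound gives
\[
\sum_{{\bf r}\in{\bf N}_n^2}[\det J_f^{\bf r}]_2^2\ge 2^{-34d-2}(2d-1)^{-3}[[f]]_2^2 .
\]
It remains to check $2^{2}(2d-1)^3\le 2^{5d}$ for all $d\ge1$: at $d=1$ the left side is $4\le 32$, and the ratio of successive values of the left side, $\bigl(\tfrac{2d+1}{2d-1}\bigr)^3\le 27$, is less than $2^5=32$. This yields the claimed constant $2^{-39d}$.

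The main point to get right is recognizing $J_{x,y}$ as $K_{x,y}$ with the antisymmetrized $H_{pq}$. The naive choice $H_{pq}=\partial_pf_1\partial_qf_2$ would not be controlled by the partial Jacobians. The rest is bookkeeping of constants.
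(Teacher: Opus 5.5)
Your proposal is correct and follows essentially the same route as the paper: you restrict to the plane spanned by $e^{2\pi ix}$ and $e^{2\pi iy}$, integrate the pointwise bound from the ``in particular'' part of Lemma~\ref{lem-hom-1} via orthogonality and Cauchy--Binet to get $2^{-30d-5}[[f]]_2^2$, and obtain the upper bound from Lemma~\ref{lem-hom-2} with the antisymmetrized $H_{pq}$. The only difference is cosmetic bookkeeping at the end: you check $4(2d-1)^3\le 2^{5d}$ directly, whereas the paper passes through $2^{4d}d^3$ and $(2d)^3\le 2^{3d}$.
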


\begin{proof}
Let 
\[
U_{x, y}(t, s):= f_1(t e^{2\pi ix}+se^{2\pi iy})
\quad\text{and}\quad
V_{x, y}(t, s):= f_2(t e^{2\pi ix}+se^{2\pi iy}).
\]
By Lemma \ref{lem-hom-1}, for every fixed $x, y\in[0, 1]^n$,
\[
\bigl[\partial_t U_{x, y}\partial_s V_{x, y} - \partial_t V_{x, y}\partial_s U_{x, y}\bigr]_2^2
\ge 2^{-30 d-6}\bigl|f_1(e^{2\pi ix})f_2(e^{2\pi iy}) - f_2(e^{2\pi ix})f_1(e^{2\pi iy})\bigr|^2.
\]
We note that
\[
f_1(e^{2\pi ix})f_2(e^{2\pi iy}) - f_2(e^{2\pi ix})f_1(e^{2\pi iy})
=
\sum_{|\mathbf p|= d}\sum_{|\mathbf q|= d} 
\bigl(a_{\bf p}(f_1)a_{\bf q}(f_2) - a_{\bf p}(f_2)a_{\bf q}(f_1)\bigr)e^{2\pi i(\langle x, \mathbf p \rangle + \langle y, \mathbf q\rangle)}.
\]
Hence,
\begin{align*}
\int_{[0, 1]^{2n}}\bigl[\partial_t U_{x, y}\partial_s V_{x, y} - \partial_t V_{x, y}\partial_s U_{x, y}\bigr]_2^2\, dxdy
&\ge 2^{-30 d-6}\sum_{|\mathbf p| = |\mathbf q|= d}
\bigl|a_{\bf p}(f_1)a_{\bf q}(f_2) - a_{\bf p}(f_2)a_{\bf q}(f_1)\bigr|^2
\\
&=2^{-30 d-5}[[f]]_2^2,
\end{align*}
where the last equality 
follows from the Cauchy--Binet formula.

On the other hand,
\[
\partial_t U_{x, y}\partial_s V_{x, y} - \partial_t V_{x, y}\partial_s U_{x, y}=
\sum_{p=1}^n\sum_{q=1}^n e^{2\pi i (x_p+y_q)}\bigl(\partial_{z_p}f_1\partial_{z_q}f_2-\partial_{z_p}f_2\partial_{z_q}f_1\bigr)(t e^{2\pi ix}+se^{2\pi iy}).
\]
Each polynomial
\[
H_{pq}:=
\partial_{z_p}f_1\partial_{z_q}f_2
-
\partial_{z_p}f_2\partial_{z_q}f_1
\]
is homogeneous of degree $k=2d-2$.
By Lemma \ref{lem-hom-2},
\begin{align*}
&\int_{[0,1]^{2n}}
\bigl[\partial_t U_{x,y}\partial_s V_{x,y}
-
\partial_t V_{x,y}\partial_s U_{x,y}\bigr]_2^2\,dxdy
\le
2^{4d-4}(2d-1)^3
\sum_{1\le p,q\le n}[H_{pq}]_2^2
\\
&=
2^{4d-3}(2d-1)^3
\sum_{1\le p<q\le n}[H_{pq}]_2^2
\le 2^{4d}d^3\sum_{1\le p<q\le n}[H_{pq}]_2^2.
\end{align*}
Combining the upper and lower estimates and using
$(2d)^3\le 2^{3d}$ yields the claimed bound.
\end{proof}

\subsection{Proof of Theorem \ref{th-algebr}}

If $[[f]]_2=0$, there is nothing to prove. Assume that $[[f]]_2>0$
and set
$m:=d(f)\ge1$.
Let
\[
f_1=\sum_{j=0}^{m} f_1^{(j)}
\quad\text{and}\quad
f_2=\sum_{j=0}^{m} f_2^{(j)}
\]
be the decompositions of $f_1$ and $f_2$ into homogeneous components, where
$f_1^{(j)}$ and $f_2^{(j)}$ are homogeneous of degree $j$. Then
\[
\det J_f^{\bf r}
=
\sum_{s=1}^{m}\sum_{j=1}^{m}
\Bigl(
\partial_{z_{r_1}}f_1^{(s)}\,\partial_{z_{r_2}}f_2^{(j)}
-
\partial_{z_{r_2}}f_1^{(s)}\,\partial_{z_{r_1}}f_2^{(j)}
\Bigr).
\]
The summand with indices $(s,j)$ is homogeneous of degree $s+j-2$. Hence the
homogeneous component of $\det J_f^{\bf r}$ of degree $2m-2$ is
$\det J_{f^{(m)}}^{\bf r}$,
where
$f^{(m)}:=\bigl(f_1^{(m)},f_2^{(m)}\bigr)$.
Therefore,
\[
[\det J_f^{\bf r}]_2^2
\ge
\bigl[\det J_{f^{(m)}}^{\bf r}\bigr]_2^2,
\]
and, by Lemma \ref{lem-hom-3},
\[
\sum_{{\bf r}\in{\bf N}_n^2}[\det J_f^{\bf r}]_2^2
\ge
\sum_{{\bf r}\in{\bf N}_n^2}
\bigl[\det J_{f^{(m)}}^{\bf r}\bigr]_2^2
\ge
2^{-39m}[[f^{(m)}]]_2^2
=
2^{-39m}[[f]]_2^2
\ge
2^{-39d}[[f]]_2^2,
\]
which completes the proof.
\qed

\section{Proof of Theorem \ref{th-MB}}
\label{sect-MB}

If $d(f)=0$, the assertion is immediate. Assume therefore that $d(f)\ge1$.
	
Recall that for $u\in \mathcal{P}_k(\mathbb{R})$ with $k=d(u)\ge 1$ and $\varepsilon>0$, 
by \cite[Corollary~3.6]{Kos25},
one has
\[
\mu_j\bigl(t\in \mathbb{R}\colon |u(t)|\le \varepsilon\bigr)\le \frac{8ekM}{|a_k(u)|^{1/k} (k!)^{1/k}}\, \varepsilon^{1/k}
\le 8e^2M|a_k(u)|^{-1/k}\varepsilon^{1/k}.
\]
In particular,
\[
\int_{\mathbb{R}}|u|^2\, d\mu_j
\ge \varepsilon^2 \bigl(1-\mu_j(|u|\le \varepsilon)\bigr),
\]
and taking $\varepsilon = (16 e^2M)^{-k}|a_k(u)|$,
we obtain
\begin{equation}\label{eq-1-dim}
\int_{\mathbb{R}}|u|^2\, d\mu_j
\ge 2^{-1}(16 e^2M)^{-2k} |a_k(u)|^2.
\end{equation}

First, assume that all the measures $\mu_j$ are compactly
supported. Let $u_{j,0}=1$, $u_{j,1}, \ldots, u_{j, d}$ be the orthonormal polynomials of the corresponding degrees with respect to the measure $\mu_j$, and let
\[
U_{\bf m}(x):=\prod_{j=1}^n u_{j,m_j}(x_j).
\]
Then the family $\{U_{\bf m}\colon |{\bf m}|\le d\}$ forms an orthonormal basis in $\mathcal{P}_d(\mathbb{R}^n)$ with respect to the inner product in $L^2(\mu)$.
For ${\bf m}\in \mathbb Z_+^n$ with $|{\bf m}|\le d$, applying \eqref{eq-1-dim} for each $j$ with $m_j>0$, we obtain
\[
\prod_{j=1}^n \bigl|a_{m_j}(u_{j, m_j})\bigr|^{-2}
\ge 2^{-|{\bf m}|}(16e^2M)^{-2|{\bf m}|} = (16\sqrt{2}e^2M)^{-2|{\bf m}|}\ge 2^{-16|{\bf m}|}M^{-2|{\bf m}|}.
\]
Moreover, for $f\in \mathcal{P}_d(\mathbb{R}^n)$
and for ${\bf m}\in\mathbb Z_+^n$ with $|{\bf m}|=d(f)$, we have
\[
\langle f, U_{\bf m}(x)\rangle_{L^2(\mu)}
= a_{\bf m}(f)\langle x^{\bf m}, U_{\bf m}(x)\rangle_{L^2(\mu)}
= a_{\bf m}(f)\prod_{j=1}^n \bigl(a_{m_j}(u_{j, m_j})\bigr)^{-1}.
\]
Therefore, for $f\in \mathcal{P}_d(\mathbb{R}^n)$,
\[
\int_{\mathbb{R}^n} |f|^2\, d\mu
\ge \sum_{|{\bf m}|=d(f)}|\langle f, U_{\bf m}(x)\rangle_{L^2(\mu)}|^2
\ge 2^{-16d(f)}M^{-2d(f)}[f]_2^2.
\]

For general measures, we choose $R$ sufficiently large that
$\mu_j([-R,R])>0$ for every $j$, and define
\[
\mu_{j,R}(A):=
\frac{\mu_j(A\cap[-R,R])}{\mu_j([-R,R])},
\quad
\mu_R:=\mu_{1,R}\otimes\ldots\otimes\mu_{n,R}.
\]
The densities of these measures are bounded by
\[
M_R:=
\frac{M}{\min\limits_{1\le j\le n}\mu_j([-R,R])},
\]
and $M_R\to M$ as $R\to\infty$. Applying the
estimate from the compactly supported
case gives
\[
\int_{[-R,R]^n}|f|^2\,d\mu
\ge
\biggl(\prod_{j=1}^n\mu_j([-R,R])\biggr)
2^{-16d(f)}M_R^{-2d(f)}[f]_2^2.
\]
Letting $R\to\infty$ and using monotone convergence proves the
assertion, including when the integral is infinite.
\qed

\section{Examples}
\label{sect-exampl}

\begin{example}\label{ex-three-components}
Let
\[
f=(f_1,f_2,f_3)\colon\mathbb R^3\to\mathbb R^3,
\quad
f_1(x)=x_1^2-1,
\quad
f_2(x)=x_1x_2,
\quad
f_3(x)=x_2^2-1.
\]
Then
\[
\nabla f_1=2x_1e_1,
\quad
\nabla f_2=x_2e_1+x_1e_2,
\quad
\nabla f_3=2x_2e_2.
\]
All three gradients belong to $\operatorname{span}\{e_1,e_2\}$.
Consequently,
\[
\det G_{\mathbb R^3}
(\nabla f_1,\nabla f_2,\nabla f_3)=0
\]
at every point of $\mathbb R^3$.
On the other hand,
\[
D^2f_1=2e_1\otimes e_1,
\quad
D^2f_2=e_1\otimes e_2+e_2\otimes e_1,
\quad
D^2f_3=2e_2\otimes e_2.
\]
These tensors are mutually orthogonal, and their squared norms are
$4$, $2$, and $4$, respectively. Hence,
\[
\det G_{(\mathbb R^3)^{\otimes2}}
(D^2f_1,D^2f_2,D^2f_3)
=
\det
\begin{pmatrix}
4&0&0\\
0&2&0\\
0&0&4
\end{pmatrix}
=
32.
\]
Therefore, there is no positive constant $C$ such that
\[
\int_{\mathbb R^3}
\det G_{\mathbb R^3}
(\nabla f_1,\nabla f_2,\nabla f_3)\,d\gamma_3
\ge
C
\det G_{(\mathbb R^3)^{\otimes2}}
(D^2f_1,D^2f_2,D^2f_3)
\]
for every quadratic mapping $f\colon\mathbb R^3\to\mathbb R^3$.
\end{example}

\begin{example}\label{ex-nonproduct}
For $n\ge2$, let $\mu_n$ be the normalized Lebesgue measure on
\[
B_n:=\{x\in\mathbb R^n\colon |x|\le\sqrt{n+2}\}.
\]
Then $\mu_n$ is an isotropic log-concave measure on $\mathbb R^n$.
Let
\[
g(x):=|x|^2-n,
\quad
f_1(x):=x_1g(x),
\quad
f_2(x):=x_2g(x).
\]
We have
\[
\nabla f_1=
(g(x)+2x_1^2,2x_1x_2,\ldots,2x_1x_n)
\quad
\text{and}
\quad
\nabla f_2=
(2x_2x_1,g(x)+2x_2^2,2x_2x_3,\ldots,2x_2x_n).
\]
Hence,
\[
\langle\nabla f_r,\nabla f_j\rangle
=
g^2\delta_{rj}+4(2|x|^2-n)x_rx_j,
\]
and therefore
\[
\det G_{\mathbb R^n}(\nabla f_1,\nabla f_2)
=
g^4+4g^2(2|x|^2-n)(x_1^2+x_2^2).
\]
By symmetry,
\[
\int_{\mathbb R^n}
\det G_{\mathbb R^n}(\nabla f_1,\nabla f_2)\,d\mu_n
=
\int_{\mathbb R^n}g^4\,d\mu_n
+
\frac8n
\int_{\mathbb R^n}
g^2|x|^2(2|x|^2-n)\,d\mu_n.
\]
It is readily verified that
\[
\int_{\mathbb R^n}g^2\,d\mu_n
=
\int_{\mathbb R^n}|x|^4\,d\mu_n-n^2
=
\frac{(n+2)^2n}{n+4}-n^2
=
\frac{4n}{n+4}.
\]
By the equivalence of moments of polynomials of a fixed degree under
log-concave measures~\cite{CW},
\[
\int_{\mathbb R^n}g^4\,d\mu_n
\le
C_1
\biggl(
\int_{\mathbb R^n}g^2\,d\mu_n
\biggr)^2
\le
C_2,
\]
where $C_1,C_2>0$ are absolute constants.
Since $0\le |x|^2\le n+2$ on $B_n$,
\[
|x|^2(2|x|^2-n)
\le
(n+2)(n+4).
\]
Consequently,
\[
\int_{\mathbb R^n}
\det G_{\mathbb R^n}(\nabla f_1,\nabla f_2)\,d\mu_n
\le
C_2+
\frac8n(n+2)(n+4)\frac{4n}{n+4}
=
C_2+32(n+2).
\]

On the other hand, the leading homogeneous parts of $f_1$ and $f_2$ are
\[
f_1^{(3)}(x)=x_1\sum_{j=1}^n x_j^2
\quad\text{and}\quad
f_2^{(3)}(x)=x_2\sum_{j=1}^n x_j^2,
\]
and therefore
$[[f]]_2^2=n^2$.
By \eqref{eq-equiv},
\[
\det
G_{(\mathbb R^n)^{\otimes3}}
(D^3f_1,D^3f_2)
\ge
36[[f]]_2^2
=
36n^2.
\]
Hence,
\[
\frac{
\int_{\mathbb R^n}
\det G_{\mathbb R^n}(\nabla f_1,\nabla f_2)\,d\mu_n
}{
\det
G_{(\mathbb R^n)^{\otimes3}}
(D^3f_1,D^3f_2)
}
\le
\frac{C_2+32(n+2)}{36n^2}
\longrightarrow0
\quad\text{as }n\to\infty.
\]
Thus, there is no dimension-free analogue of
Theorem~\ref{th-main} for isotropic log-concave measures outside the product case.
\end{example}

\begin{example}\label{ex-exponential-decay}
For odd $d\in\mathbb N$, consider $f=(f_1,f_2)$, where
\[
f_1(z_1,z_2)=(z_1+z_2)^d,
\quad
f_2(z_1,z_2)=(z_1-z_2)^d.
\]
Their coefficient vectors are orthogonal, and binomial identities give
$[[f]]_2^2=\binom{2d}{d}^2$.
Moreover,
\[
\det J_f^{(1,2)}
=
-2d^2(z_1^2-z_2^2)^{d-1},
\quad
[\det J_f^{(1,2)}]_2^2
=
4d^4\binom{2d-2}{d-1}.
\]
Thus, by Stirling's formula,
\[
\frac{[\det J_f^{(1,2)}]_2^2}{[[f]]_2^2}
=
\frac{4d^4\binom{2d-2}{d-1}}{\binom{2d}{d}^2}
\sim
\sqrt{\pi}\,d^{9/2}4^{-d}
\]
as $d\to\infty$ through odd integers.
Hence exponential decay in $d$ of the best constant in
Theorem~\ref{th-algebr} is unavoidable.
\end{example}

\section*{Use of AI Tools}

The present paper is a follow-up to \cite{Kos26} and grew out of the author's discussions with ChatGPT concerning possible improvements of the algebraic estimate introduced in \cite[Theorem~1.3]{Kos26}. The idea, suggested by ChatGPT, of reducing the problem to the two-dimensional case using trigonometric vectors made it possible to extend the estimate from an $\ell^\infty$-type result to an $\ell^2$-type result. The resulting estimate, proved here in Theorem~\ref{th-algebr}, provides the missing ingredient for answering the question of Nourdin, Nualart, and Poly in the Gaussian setting.

The author acknowledges the use of ChatGPT during the preparation of the manuscript to explore possible mathematical approaches and for language and stylistic editing. All AI-generated material used was reviewed and revised by the author. The author takes full responsibility for the content and final version of the paper.

\section*{Acknowledgements}

The author was supported by the AEI grants 
RYC2023-043616-I and
PID2025-169712NA-I00 funded by MICIU/AEI/10.13039/501100011033,
and by the Spanish State Research Agency, through the Severo Ochoa and Mar\'ia de Maeztu Program for Centers and
Units of Excellence in R\&D (CEX2020-001084-M).
The author thanks CERCA Programme (Generalitat de Catalunya) for institutional support.
{\sloppy
	
}

\end{document}